\def\dd{\mathrm d}
\theoremstyle{plain}      
\newtheorem{thm}{Theorem}
\newtheorem{prop}[thm]{Proposition}
\theoremstyle{definition}
\newtheorem{remark}[thm]{Remark}
\DeclareMathAlphabet{\doba}{U}{msb}{m}{n}         
\gdef\mC{\doba{C}}
\gdef\mH{\doba{H}}
\gdef\mN{\doba{N}}
\gdef\mR{\doba{R}}
\gdef\mZ{\doba{Z}}
\def\SL{\mathrm{SL}}
\def\di{{\rm d}}
\let\ti\tilde
\newcommand{\definedas}{\mathrel{\raise.095ex\hbox{\rm :}\mkern-5.2mu=}}
\title[On Weyl-reducible conformal manifolds]{On Weyl-reducible conformal manifolds and lcK structures}
\author{Farid Madani, Andrei Moroianu, Mihaela Pilca}
\address{Farid Madani\\Fakult\"at f\"ur Mathematik\\
Universit\"at Regensburg\\Universit\"atsstr. 31 
D-93040 Regensburg, Germany}
\email{madani@mathematik.uni-regensburg.de}
\address{Andrei Moroianu \\ Laboratoire de Math\'ematiques d'Orsay, Univ. Paris-Sud, CNRS, 
Universit\'e Paris-Saclay, 91405 Orsay, France }
\email{andrei.moroianu@math.cnrs.fr}
\address{Mihaela Pilca\\Fakult\"at f\"ur Mathematik\\
Universit\"at Regensburg\\Universit\"atsstr. 31 
D-93040 Regensburg, Germany}
\email{mihaela.pilca@mathematik.uni-regensburg.de}
\begin{document}

\maketitle

\begin{abstract}
	A recent result of M. Kourganoff states that if $D$ is a closed, reducible, non-flat Weyl connection on a compact conformal manifold $M$, then the universal cover of $M$, endowed with the metric whose Levi-Civita covariant derivative is the pull-back of $D$, is isometric to $\mR^q\times N$ for some irreducible, incomplete Riemannian manifold $N$. Moreover, he characterized the case where the dimension of $N$ is $2$ by showing that $M$ is then a mapping torus of some Anosov diffeomorphism of the torus $\mathbb T^{q+1}$. We show that in this case one necessarily has $q=1$ or $q=2$. 
\end{abstract}
	
\section{Weyl-reducible manifolds}

 Let $(M,c)$ be a compact conformal manifold. A {\em Weyl structure} on $M$ is a torsion-free linear connection $D$ preserving the conformal structure $c$, in the sense that for every Riemannian metric $g\in c$, $D_Xg=\theta_g(X)g$ for some $1$-form $\theta_g$ on $M$ called the {\em Lee form} of $D$ with respect to $g$. If $g':=e^{f}g$ is another metric in the conformal class, then 
$$\theta_{g'}=\theta_g+\di f.$$

 The Weyl structure $D$ is called {\em closed} if $\theta_g$ is closed for one (and thus all) metrics $g\in c$ and {\em exact} if $\theta_g$ is exact for all $g\in c$. From the above formula we see that if $D$ is exact, so that $\theta_g=\di f$ for some $g\in c$, then $\theta_{e^{-f}g}=0$, thus $D$ is the Levi-Civita connection of the metric $e^{-f}g\in c$. 

The manifold $(M,c,D)$ is called {\em Weyl-reducible} if the Weyl structure $D$ is reducible and non-flat. 

Based on some evidence given by the Gallot theorem on Riemannian cones \cite{gal}, it was conjectured in \cite{bm} that every closed, non-exact Weyl structure on a compact conformal manifold is either irreducible or flat. Matveev and Nikolayevsky \cite{mn} constructed a counterexample to the general conjecture, but later on Kourganoff proved that a weaker form of this conjecture holds: 

\begin{thm}\label{k1} (cf. \cite[Thm. 1.5]{k}). A closed non-exact Weyl structure $D$ on a compact conformal manifold $M$, is either flat or irreducible, or the universal cover $\widetilde M$ of $M$ together with the Riemannian metric $g_D$ whose Levi-Civita connection is $D$, is the Riemannian product of a complete flat space $\mathbb{R}^q$ and an incomplete Riemannian manifold $(N,g_N)$ with irreducible holonomy:
$$(\widetilde M,g_D)=\mathbb{R}^q\times (N,g_N).$$
\end{thm}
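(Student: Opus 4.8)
The plan, which follows the lines of Kourganoff's proof, is to reduce the statement to a rigidity property of Riemannian manifolds carrying a proper homothety, and then to inspect the de~Rham decomposition of such a manifold in the presence of that homothety.

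\emph{Reduction to the universal cover.} Since $D$ is closed, the Lee form $\theta_g$ of any $g\in c$ is closed, so its pull-back to the universal cover $\pi\colon\widetilde M\to M$ is exact, say $\theta_g=\di f$. A direct computation gives $D\bigl(e^{-f}\pi^*g\bigr)=0$, so $g_D\definedas e^{-f}\pi^*g$ is a Riemannian metric on $\widetilde M$ whose Levi-Civita connection is the pull-back of $D$. For $\gamma\in\pi_1(M)$ one has $\gamma^*f=f+c_\gamma$ with $c_\gamma$ constant, hence $\gamma^*g_D=e^{-c_\gamma}g_D$; thus $\pi_1(M)$ acts on $(\widetilde M,g_D)$ by homotheties through a character $\chi\colon\pi_1(M)\to\mathbb{R}_{>0}$, which is trivial exactly when $D$ is exact. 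So in the non-exact case $(\widetilde M,g_D)$ is a simply connected, non-flat Riemannian manifold carrying a cocompact group of homotheties, at least one of them proper.

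\emph{Completeness forces flatness.} Next I would invoke the classical fact that a complete simply connected Riemannian manifold admitting a proper homothety $\varphi$ is isometric to Euclidean space: after replacing $\varphi$ by its inverse we may assume it is a metric contraction, hence, by completeness, it has a unique fixed point $p$; the curvature tensor is intertwined with the homothetic one while $\di\varphi_p$ shrinks lengths, which forces $R_p=0$, and since the iterates $\varphi^{-n}$ spread any neighbourhood of $p$ over all of $\widetilde M$ (Hopf--Rinow), the same relation propagates to $R\equiv0$. Applied to $g_D$ this shows that either $D$ is flat, which is one of the alternatives, or $g_D$ is \emph{incomplete}; this is the case to be analysed.

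\emph{The de~Rham decomposition and its globalisation.} Since $\widetilde M$ is simply connected and $\mathrm{Hol}(g_D)$ is reducible, there is a canonical $D$-parallel orthogonal splitting $T\widetilde M=E_0\oplus E_1\oplus\cdots\oplus E_k$ with $E_0$ the maximal flat factor and $E_1,\dots,E_k$ non-flat with irreducible holonomy; each $E_i$ is integrable with totally geodesic leaves, so $\widetilde M$ is \emph{locally} the Riemannian product of the leaves, and the homothety group preserves this canonical decomposition (after a finite cover of $M$, so that the non-flat factors are not permuted). It remains to globalise the local product as $\widetilde M\cong\mathbb{R}^q\times N$, to see that the flat factor is complete, and that there is a single non-flat factor, which is incomplete. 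Each factor inherits a proper homothety $\gamma$ (any $\gamma$ with $\chi(\gamma)\neq1$); by the previous step no non-flat factor can be complete, while on the flat factor the homothety is affine and either has a fixed point — in which case a geodesic-extension argument identifies that factor with $\mathbb{R}^q$ — or has its fixed point only in the metric completion, producing a further incomplete direction. One then checks that $M$ being compact is incompatible with two independent incomplete directions, since a map rescaling two factors by unequal ratios is neither an isometry nor a homothety of the product and hence cannot occur in $\pi_1(M)$, which would then fail to act cocompactly; this forces the flat factor to be $\mathbb{R}^q$, forces $k=1$, and confines all the incompleteness to the single irreducible factor $N$. With this in hand the monodromy argument from the proof of de~Rham's theorem globalises the local product and yields $\widetilde M=\mathbb{R}^q\times(N,g_N)$.

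\emph{Main obstacle.} The delicate point is the last step: de~Rham's theorem genuinely uses geodesic completeness, which is unavailable here, so the local product must be globalised by squeezing everything out of the homothety and the compactness of $M$; in particular, ruling out more than one non-flat factor, and ruling out an incomplete flat factor, amounts to a careful bookkeeping of where the incompleteness of $g_D$ can sit relative to the de~Rham factors, and this is where essentially all of the work lies.
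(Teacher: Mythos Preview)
The paper does not prove Theorem~\ref{k1}; it is merely quoted from Kourganoff \cite[Thm.~1.5]{k} and used as a black box. So there is no ``paper's own proof'' to compare your sketch against.

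That said, a few comments on your outline. The first two steps (passage to the universal cover, and the classical fact that a complete manifold with a proper homothety is flat) are correct and standard. You also correctly locate the real difficulty: de~Rham's global decomposition theorem needs completeness, and here $g_D$ is incomplete, so one cannot simply invoke it.

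Your proposed mechanism for resolving this difficulty, however, does not work as written. You argue that two independent incomplete directions are excluded because ``a map rescaling two factors by unequal ratios is neither an isometry nor a homothety''. But any $\gamma\in\pi_1(M)$ acts on $(\widetilde M,g_D)$ as a global homothety with a \emph{single} ratio $\chi(\gamma)$, hence it rescales every de~Rham factor by that same ratio; unequal ratios never arise, and this line of reasoning gives no obstruction to having several incomplete irreducible factors or an incomplete flat factor. Likewise, the claim that on the flat factor the homothety ``has a fixed point, in which case a geodesic-extension argument identifies that factor with $\mathbb{R}^q$'' presupposes exactly the completeness you are trying to establish.

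Kourganoff's actual argument is substantially more involved and of a rather different, dynamical flavour: it analyses the transverse similarity structure and the foliations induced by the parallel distributions, and uses compactness of $M$ to control how incomplete geodesics in the leaves can escape. Your sketch captures the setup and the obstacle accurately, but the hard core of \cite[Thm.~1.5]{k} is precisely what you flag as the ``main obstacle'', and it is not dispatched by the bookkeeping you indicate.
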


In \cite[Example~1.6]{k} (see also \cite{mn}), examples of closed reducible Weyl structures on compact manifolds are constructed using a linear map $A\in \mathrm{SL}_{q+1}(\mZ)$, such that:
\begin{enumerate}
\item there exists an $A$-invariant decomposition $\mR^{q+1}=E^s\oplus E^u$ with $\mathrm{dim}(E^u)=1$ and $A|_{E^u}=\lambda ^q\mathrm{Id}_{E^u}$ for some real number $\lambda>1$;
\item there exists a positive definite symmetric bilinear form $b$ on $E^s$, such that $\lambda A|_{E^s}$ is orthogonal with respect to $b$. 
\end{enumerate}
Then $A$ induces a diffeomorphism (also denoted by $A$) of the torus $\mathbb T^{q+1}$, whose mapping torus $M_A:=\mathbb T^{q+1}\times (0,\infty)/(x,t)\sim (Ax,\frac1\lambda t)$, carries a reducible non-flat closed Weyl structure $D_\varphi$ obtained by projecting to $M_A$ the Levi-Civita connection of the metric on  $\mathbb T^{q+1}\times (0,\infty)$ given by:
\[g_\varphi:=\di x_1^2+\cdots +\di x_q^2+\varphi(t)\di x_{q+1}^2+\di t^2,\]
where $x_1, \dots, x_{q+1}$ are the local coordinates with respect to an orthonormal basis $(e_1, \dots, e_{q+1})$ with $e_1, \dots, e_{q}\in E^s$, $e_{q+1}\in E^u$, and $\varphi\colon (0,+\infty)\to (0,+\infty)$ is any smooth function satisfying $\varphi(\lambda t)=\lambda^{2q+2}\varphi(t)$ for every $t\in(0,+\infty)$.

Moreover, Kourganoff proved that these are, up to diffeomorphism, the only examples of Weyl-reducible manifolds when the incomplete factor $N$ is 2-dimensional:

\begin{thm} \label{k2} \cite[Theorem~1.7]{k} Assume that $D$ is a closed non-exact Weyl structure $D$ on a compact conformal manifold $M$ which is neither flat nor irreducible. If the irreducible manifold $N$ given by Theorem \ref{k1} is $2$-dimensional, then $(M,D)$ is isomorphic to one of the Riemannian manifolds $(M_A,D_\varphi)$.
\end{thm}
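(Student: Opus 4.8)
The plan is to realize $\Gamma=\pi_1(M)$ as the fundamental group of a mapping torus of the required form. By Theorem \ref{k1} the pulled-back connection is the Levi-Civita connection of $g_D=g_0\oplus g_N$, with $g_0$ flat on $\mR^q$ and $(N,g_N)$ an incomplete, non-flat surface. Since $D$ descends to $M$, every deck transformation preserves $D$ and the conformal class, whence $\gamma^*g_D=\rho(\gamma)^2 g_D$ for a homomorphism $\rho\colon\Gamma\to\mR^+$ with nontrivial image (non-exactness). A constant rescaling does not change the Levi-Civita connection, so each $\gamma$ is affine for $g_D$ and preserves its de Rham decomposition; as a homothety cannot interchange the flat factor with the irreducible factor $N$ (their holonomies differ), it splits as $\gamma=(\gamma_1,\gamma_2)$, a homothety of ratio $\rho(\gamma)$ on each factor. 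I would first record this splitting, so that $\Gamma$ embeds into $\mathrm{Hom}(\mR^q)\times\mathrm{Hom}(N)$ with both components governed by the same $\rho$.

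Next I would introduce a canonical radial coordinate on $N$. The Gauss curvature satisfies $K\circ\gamma_2=\rho(\gamma)^{-2}K$; after showing that $K$ is nowhere zero (hence of constant sign, say $K<0$), set $t\definedas(-K)^{-1/2}\colon N\to(0,\infty)$, a surjective submersion with $t\circ\gamma_2=\rho(\gamma)\,t$ and incomplete end $t\to0$. Thus the elements of $\Gamma_0\definedas\ker\rho$ act by isometries preserving every level curve $C_s=\{t=s\}$, while a $\gamma$ with $\rho(\gamma)\neq1$ scales $t$. Since $N\cong\mR^2$ is simply connected, the $C_s$ are regular lines foliating $N$.

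The heart of the proof, and the step I expect to be the main obstacle, is to promote this data to a rotationally symmetric structure: I must show that $|\nabla t|$ is constant along each $C_s$ and that the $C_s$ are orbits of a one-parameter isometry group of $N$, so that in suitable coordinates $g_N=\di t^2+\varphi(t)\,\di\theta^2$. This is where $\dim N=2$, the incompleteness, and the cocompact homothety action must be combined — presumably by propagating the isometries that $\Gamma_0$ induces along the level curves by means of the contracting homothety $\gamma_2$ and passing to a limit to obtain the rotational Killing field. Once $N$ has this warped form, $\mathrm{Hom}(N)$ is the solvable group of maps $(t,\theta)\mapsto(\mu t,\nu\theta+c)$ compatible with $g_N$, and compactness of $M$ forces $\rho(\Gamma)$ to be infinite cyclic, generated by $\mu=\rho(\gamma_0)$; equivalently $\log t$ induces a fibration $M\to S^1$ exhibiting $M$ as a mapping torus with fiber $F=\{t=1\}/\Gamma_0$. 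The level set $\{t=1\}=\mR^q\times C_1$ is flat and $\Gamma_0$ acts on it cocompactly by isometries, so Bieberbach's theorem, together with the normalizing homothety $\gamma_0$, forces $\Gamma_0$ to be a translation lattice $\Lambda\cong\mZ^{q+1}$ and $F=\mathbb{T}^{q+1}$; hence $\Gamma\cong\Lambda\rtimes_A\mZ$, where $A\in\mathrm{GL}_{q+1}(\mZ)$ is conjugation by $\gamma_0$ on $\Lambda$.

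Finally I would match this with the model. Put $\lambda\definedas\mu^{-1}>1$, let $E^u$ be the $C_1$-direction and $E^s=\mR^q$. On $E^s$ the homothety $\gamma_0$ acts as $\lambda^{-1}$ times a $g_0$-orthogonal map, so with $b\definedas g_0|_{E^s}$ the map $\lambda A|_{E^s}$ is $b$-orthogonal, which is condition (2). The scaling law $\gamma_2^*g_N=\mu^2 g_N$ together with $g_N=\di t^2+\varphi(t)\,\di\theta^2$ and the integrality of the $\theta$-lattice yields $\varphi(\lambda t)=\lambda^{2q+2}\varphi(t)$ and the eigenvalue $A|_{E^u}=\lambda^{q}$, which is condition (1); a determinant count then places $A$ in $\SL_{q+1}(\mZ)$ after passing to an orientation-preserving representative. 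The mapping-torus description identifies $M$ with $M_A$ by a diffeomorphism carrying $g_D$ to $g_\varphi$, hence $D$ to $D_\varphi$, so $(M,D)$ is isomorphic to $(M_A,D_\varphi)$, as claimed.
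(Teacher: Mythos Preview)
This theorem is not proved in the present paper: it is merely quoted from Kourganoff \cite[Theorem~1.7]{k} and then used as input for the number-theoretic Proposition~\ref{q12}. There is thus no in-paper argument to compare your proposal against.

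Regarding the proposal itself, there is a concrete obstruction beyond the gap you already flag. Your radial coordinate $t=(-K)^{-1/2}$ requires the Gauss curvature of $N$ to be nowhere zero, but this fails already in the target models $(M_A,D_\varphi)$: for the surface metric $\di t^2+\varphi(t)\,\di x_{q+1}^2$ one has $K=-(\sqrt\varphi)''/\sqrt\varphi$, and the scaling relation $\varphi(\lambda t)=\lambda^{2q+2}\varphi(t)$ does not prevent $\sqrt\varphi$ from having inflection points (writing $\sqrt\varphi(t)=t^{q+1}\psi(\ln t)$ with $\psi$ periodic, one easily arranges $K$ to vanish along a level circle). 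Hence the construction cannot start as written. The step you correctly identify as ``the main obstacle'' --- producing the warped-product form of $g_N$ --- is the actual content of Kourganoff's theorem, and his argument proceeds by a dynamical analysis of the similarity structure on the compact quotient (behaviour of the stable foliation under the contracting homothety, recurrence on $M$), not via a curvature-defined coordinate; the heuristic ``propagate isometries and pass to a limit'' does not substitute for this. Finally, your Bieberbach step is also incomplete: Bieberbach yields only a finite-index translation sublattice of $\Gamma_0$, and the passage from ``$\gamma_0$ normalizes $\Gamma_0$'' to ``$\Gamma_0$ is a full translation lattice $\cong\mZ^{q+1}$'' (rather than a finite extension thereof, which would give a flat manifold that is not a torus) needs an explicit argument.
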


It turns out, however, that matrices $A\in \mathrm{SL}_{q+1}(\mZ)$ satisfying the conditions (1) and (2) above, only exist for $q=1$ or $q=2$. This is the object of the next section.

 \section{A number-theoretical result}

\begin{prop}\label{q12}
Let $q\in\mN^*$ and $A\in\SL_{q+1}(\mZ)$, such that there is a direct sum decomposition $\mR^{q+1}=E^s\oplus E^u$ invariant by $A$, with $\dim(E^u)=1$. If there exists a positive definite symmetric bilinear form $b$ on $E^s$ and a real number $\lambda>1$, such that $\lambda A|_{E^s}$ is orthogonal with respect to $b$, then $q\in\{1,2\}$.
\end{prop}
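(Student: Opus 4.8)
The plan is to analyze the eigenvalues of $A$ directly. Since $A\in\SL_{q+1}(\mZ)$, its characteristic polynomial is monic of degree $q+1$ with integer coefficients, and $\det A = 1$. The decomposition $\mR^{q+1}=E^s\oplus E^u$ with $\dim E^u = 1$ means $A|_{E^u}$ acts by a real eigenvalue $\mu$. The product of all eigenvalues (with multiplicity) of $A$ is $\det A = 1$. Condition (2) says $\lambda A|_{E^s}$ is $b$-orthogonal for some positive definite $b$, so all eigenvalues of $\lambda A|_{E^s}$ have absolute value $1$, i.e. every eigenvalue $\nu$ of $A|_{E^s}$ satisfies $|\nu| = 1/\lambda$. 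There are $q$ such eigenvalues (counted with multiplicity), so $\prod_{\nu}|\nu| = \lambda^{-q}$, whence $|\mu| = \lambda^{q}$. (Note the Example takes $\mu = \lambda^q$ precisely; the sign will be handled below.) So $A$ has a dominant real eigenvalue $\mu$ with $|\mu|=\lambda^q$ and all other eigenvalues of modulus $\lambda^{-1}$.

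**Integrality constraints.**

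The key point is that the characteristic polynomial $P(X) = \det(X\Id - A) \in \mZ[X]$ has all its roots lying on the circle of radius $\lambda^{-1}$ except for the single real root $\mu$ of modulus $\lambda^q$. First I would show $\mu$ is an algebraic integer all of whose Galois conjugates are either $\mu$ itself or lie on $|z| = \lambda^{-1}$; in fact, since $\mu\mu_{\text{conj}}$-type products are constrained, I expect $\lambda$ to be forced to be an algebraic number, and more precisely $\lambda^{q+1} = |\mu|\cdot\lambda = $ (product of $|\mu|$ with one stable modulus)$\cdots$ — the cleanest route is: the minimal polynomial of $\mu$ over $\mQ$ divides $P$, and reciprocally, consider $Q(X) = \lambda^{q+1} P(X/\lambda)$, which is again monic integer-like after noting that $\lambda\cdot A|_{E^s}$ has characteristic polynomial with roots on the unit circle. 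I would then invoke Kronecker's theorem: a monic integer polynomial all of whose roots lie on the unit circle has all roots being roots of unity. The obstruction is that $\lambda A|_{E^s}$ need not have an \emph{integer} characteristic polynomial — only $A$ does. So the real work is to extract arithmetic information about $\lambda$ from the fact that $\det A = \pm 1$ and $A$ is integral.

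**The dimension bound.**

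To get $q\le 2$, I would argue as follows: the eigenvalues of $A$ on $E^s$ come in complex-conjugate pairs (or are real $\pm\lambda^{-1}$), all of modulus $\lambda^{-1}$; each conjugate pair contributes a real quadratic factor $X^2 - (\text{trace})X + \lambda^{-2}$ to $P(X)$. For $P$ to have integer coefficients, these factors, when multiplied together with $(X-\mu)$, must produce integer coefficients. The constant term of each quadratic factor is $\lambda^{-2}$; multiplying $k$ such quadratic factors gives constant term $\lambda^{-2k}$, and times $(-\mu)$ gives $\pm\lambda^{q-2k} = \pm 1$ (since the total constant term of $P$ is $(-1)^{q+1}\det A = \pm1$), forcing $q = 2k$ if there are no real stable eigenvalues — but if $q$ is odd there must be a real stable eigenvalue $\pm\lambda^{-1}$, and $\pm\lambda^{-1}\in$ roots of an integer monic polynomial forces $\lambda^{-1}$ to be an algebraic integer, while simultaneously $\mu = \pm\lambda^q$ is an algebraic integer. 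Combining: $\lambda$ is a unit-like algebraic number and a Pisot/Salem-type analysis (all conjugates of $\mu$ other than itself having modulus $<1$, since $\lambda>1$) shows $\mu$ is a Pisot number whose remaining conjugates all have the \emph{same} modulus $\lambda^{-1}$; a Pisot number of degree $d$ with $d-1$ conjugates of equal modulus is extremely restrictive. I expect that pushing the relation $\lambda^{q-2k} = $ (an algebraic integer unit) together with the degree count $q+1 = 1 + 2(\#\text{pairs}) + (\#\text{real stable})$ pins down $q\in\{1,2\}$ after eliminating $q\ge 3$ case by case (the main obstacle being the sign bookkeeping and handling the parity of $q$ — whether a real stable eigenvalue $-\lambda^{-1}$ occurs). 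The hardest part will be ruling out large $q$ cleanly rather than by an unbounded case analysis; I would look for a single inequality, perhaps bounding $\lambda$ from above via $|{\rm tr}\, A|\le$ (something polynomial in $\lambda^{-1}$) plus $\mu = \pm\lambda^q$, forcing $\lambda^q \le C\lambda^{-1} + (\text{bounded})$, which is impossible for $q$ large since $\lambda > 1$.
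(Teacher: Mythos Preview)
Your eigenvalue setup is correct: $A$ has one real eigenvalue $\mu$ of modulus $\lambda^q$ and $q$ eigenvalues of modulus $\lambda^{-1}$. But from there the proposal never reaches a bound on $q$; it is a list of approaches, each of which either stalls or fails.

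The trace inequality at the end does not work: from $\mathrm{tr}\,A = \mu + \lambda^{-1}\sum_j z_j$ you get $|\mathrm{tr}\,A - \mu| \le q\lambda^{-1}$, but $\mathrm{tr}\,A$ is an \emph{unbounded} integer (roughly $\lfloor\mu\rfloor$), so this gives no upper bound on $\lambda^q$. Your quadratic-factor bookkeeping only recovers $|\det A|=1$, which you already knew. The Kronecker idea fails for exactly the reason you identify: $\lambda A|_{E^s}$ has no reason to have integer characteristic polynomial. And invoking ``a Pisot number with all conjugates of equal modulus is extremely restrictive'' is true but is itself the hard statement --- the paper notes that for odd $q$ this is a theorem of Ferguson with a rather involved proof, and you supply no argument.

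Two ingredients are missing. First, the characteristic polynomial $\mu_A$ is \emph{irreducible} over $\mZ$: any proper monic integer factor not containing the root $\mu$ would have all roots of modulus $\lambda^{-1}<1$, hence constant term a nonzero integer of modulus $<1$, which is impossible. Second --- and this is the actual engine --- one manufactures auxiliary monic integer polynomials of degree $q+1$ that share the root $\lambda^q$ with $\mu_A$, built via Newton's identities from suitable powers of the roots of $\mu_A$ or of its reciprocal $X^{q+1}\mu_A(1/X)$. Irreducibility then forces each such polynomial to \emph{equal} $\mu_A$, and matching the remaining roots produces explicit algebraic relations among $\lambda$ and the $z_j$ that collapse to a contradiction unless $q=1$ (odd case) or $q=2$ (even case). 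Without irreducibility and this power-map comparison, none of your sketched lines closes.
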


\begin{proof}
Let $C$ be a symmetric positive definite matrix, such that $b=\<C^2\cdot,\cdot\>$, where $\<\cdot,\cdot\>$ is the standard Euclidean scalar product. Then the following equivalence holds:\\
$$\lambda A|_{E^s} \in \mathrm{O}(E^s, b) \Longleftrightarrow C\cdot(\lambda A|_{E^s})\cdot C^{-1}\in \mathrm{O}(q).$$
In particular, each eigenvalue of $ \mathrm{Spec(\lambda A|_{E^s})}$ has modulus $1$ and the characteristic polynomial of $A$ denoted by $\mu_A$ is given by:
  $$\mu_A(X)=(X-\lambda^q)\prod_{j=1}^{q}\left(X-\frac{z_j}{\lambda}\right),$$
where $z_j$ are complex numbers with $|z_j|=1$ for all $j\in\{1,\ldots,q\}$, and $\prod_{j=1}^q z_j=1$. Note that $\mu_A$ is irreducible in $\mathbb{Z}[X]$, since if it were a product of two non-constant polynomials with integer coefficients, one of them would have all roots of modulus less than $1$, which is impossible.
We distinguish the following two cases:

{\bf Case 1.} If $q=2p$ is even, denoting $\mu_A(X)=\sum_{j=0}^{2p+1} a_j X^j$ with $a_j\in\mZ$ and $a_{2p+1}=1$, $a_0=-1$, we get 
$$\lambda^{2p}+\frac1\lambda\sum_{j=1}^{2p}z_j=-a_{2p},\qquad \lambda^{-2p}+\lambda\sum_{j=1}^{2p}\frac1{z_j}=a_{1}.$$
This shows that the sum $s:=\sum_{j=1}^{2p}z_j$ is real, and since $|z_j|=1$ for all $j\in\{1,\ldots,2p\}$, $s$ is also equal to $\sum_{j=1}^{2p}\frac1{z_j}$. Eliminating $s$ from the two equations above, yields
$$\lambda^{4p+2}+a_{2p}\lambda^{2p+2}+a_1\lambda^{2p}-1=0.$$
Consequently, $\lambda^2$ is root of the polynomial $$Q(X):=X^{2p+1}+a_{2p}X^{p+1}+a_1X^{p}-1.$$ 
Denote by $r_1,\ldots,r_{2p}$ the other complex roots of $Q$. Newton's relations show that there exists a monic polynomial $\widetilde Q\in\mZ[X]$ whose roots are $\lambda^{2p},r_1^p,\ldots,r_{2p}^p$. The monic polynomials $\mu_A$ and $\widetilde Q\in \mZ[X]$ have both degree ${2p+1}$ and $\lambda^{2p}$ is a common root. Since $\mu_A$ is irreducible, they must coincide, so up to a permutation, one can assume that $r_j^p=\frac{z_j}{\lambda}$ for all $j\in\{1,\ldots,2p\}$. This shows that $\lambda^{\frac1p}r_j$ are complex numbers of modulus one for all $j\in\{1,\ldots,2p\}$. 

If $p\ge 2$, the coefficients of $X^{2p}$ and $X$ in the polynomial $Q$ vanish, so 
$$\lambda^2+\sum_{j=1}^{2p}r_j=0=\frac1{\lambda^2}+\sum_{j=1}^{2p}\frac1{r_j}.$$
Thus $\sum_{j=1}^{2p}r_j=-\lambda^2$ and as $|\lambda^{\frac1p}r_j|=1$ for all $j$, 
$$-\lambda^{-2}=\sum_{j=1}^{2p}\frac1{r_j}=\lambda^{\frac2p}\sum_{j=1}^{2p}r_j=-\lambda^{\frac2p}\lambda^2.$$
This contradicts the fact that $\lambda>1$, showing that $p=1$ and therefore $q=2$ (see also \cite[Lemma 3.5]{ao}).

{\bf Case 2.} If $q$ is odd, then $\mu_A$ has at least one further real root, so either $\frac{1}{\lambda}$ or $-\frac{1}{\lambda}$ is a root of $\mu_A$. Up to reordering the subscripts one thus has $z_1=\pm 1$. Assume that $z_1=1$ (the argument for $z_1=-1$ is the same). The monic polynomial $P\in\mZ[X]$ defined by $P(X):=X^{q+1}\mu_A(\frac{1}{X})$ satisfies $P(0)=1$, and its roots are $\{\lambda^{-q}, \lambda, \frac{\lambda}{z_2}, \dots, \frac{\lambda}{z_q}\}$.

By Newton's identities again, there exists a monic polynomial $\widetilde P\in\mZ[X]$ with $\widetilde P(0)=1$, whose roots are $\{\lambda^{-q^2}, \lambda^q, (\frac{\lambda}{z_2})^q, \dots, (\frac{\lambda}{z_q})^q\}$.

Since the monic polynomials $\mu_A$ and $\widetilde P\in \mZ[X]$ (of same degree) have $\lambda^q$ as common root, and $\mu_A$ is irreducible, they must coincide. In particular $\lambda^{-q^2}$ is a root of $\mu_A$. On the other hand every root of $\mu_A$ has complex modulus equal to either $\lambda^q$ or $\frac1\lambda$. Since $\lambda>1$, we obtain $q=1$.
\end{proof}

\begin{remark}
As pointed out by V. Vuletescu, for odd $q$, Proposition \ref{q12} also follows from a more general result of Ferguson \cite{f}, whose proof, however, is rather involved.
\end{remark}

\section{Applications}

Our main application concerns locally conformally Kähler manifolds. Recall that a Hermitian manifold $(M,g,J)$ of complex dimension $n\geq 2$ is called {\em locally conformally K\"ahler} (in short, lcK) if around every point in $M$ the metric $g$ can be conformally rescaled to a K\"ahler metric. This condition is equivalent to the existence of a closed 1-form $\theta$, such that 
\begin{equation*}\label{eqomega}
\dd\Omega=\theta\wedge \Omega,\end{equation*}
where $\Omega:=g(J\cdot,\cdot)$ denotes the fundamental 2-form. 
Let now $\widetilde M$ be the universal cover of an lcK manifold $(M,J,g,\theta)$, endowed with the pull-back lcK structure $(\tilde J,\tilde g,\tilde \theta)$. Since $\widetilde M$ is simply connected, $\ti\theta$ is exact, {\em i.e.} $\tilde\theta=\di\varphi$, and by the above considerations, the metric $g^K:=e^{-\varphi}\tilde g$ is K\"ahler. 

The group $\pi_1(M)$ acts on $(\widetilde M,\tilde J,g^K)$ by holomorphic homotheties. Furthermore, we assume that the lcK structure is strict, in the sense that $\pi_1(M)$ is not a subgroup of the isometry group of $(\widetilde{M}, g^K)$. In particular, the Levi-Civita connection of the K\"ahler metric $g^K$ projects to a closed, non-exact Weyl structure on ${M}$, called the {\em standard Weyl structure}. Its Lee form with respect to $g$ is exactly $\theta$.

Due to the fact that the real dimension of an lcK manifold is even, applying Proposition \ref{q12} to the special case of a compact strict lcK manifold whose standard Weyl structure is reducible, we obtain the following:

\begin{prop}\label{q2}
Let $M$ be a compact Weyl-reducible strict lcK manifold. If the irreducible factor $N$ in the splitting of the universal cover  $(\widetilde{M}, g^K)$ as a Riemannian product $\mR^q\times N$ given by Theorem \ref{k1}
is $2$-dimensional, then $q=2$ and thus $M$ is an Inoue surface $S^0$, cf.  \cite{i}.
\end{prop}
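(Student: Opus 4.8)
The plan is to feed the hypotheses into the three facts already recalled. Because $M$ is a \emph{strict} lcK manifold, its standard Weyl structure $D$ is closed and non-exact; by assumption it is moreover reducible and non-flat, so Theorem~\ref{k1} applies and yields the splitting $(\widetilde M,g^K)=\mR^q\times N$ with $N$ irreducible and incomplete. Since $N$ is assumed $2$-dimensional, Theorem~\ref{k2} applies in turn and identifies $(M,D)$ with one of the models $(M_A,D_\varphi)$ of \cite[Example~1.6]{k}, the matrix $A\in\SL_{q+1}(\mZ)$ satisfying conditions (1) and (2). Proposition~\ref{q12}, applied to this $A$, then forces $q\in\{1,2\}$.

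Next I would invoke the parity carried by the complex structure to remove the case $q=1$. From $\widetilde M=\mR^q\times N$ with $\dim N=2$ one has $\dim_{\mR}M=q+2$, while $M$ being lcK of complex dimension $n\ge 2$ forces $\dim_{\mR}M=2n\ge 4$; hence $q=2n-2$ is even and $\ge 2$, and together with Proposition~\ref{q12} this leaves only $q=2$, so $n=2$. Thus $M$ is a compact complex surface, diffeomorphic to $M_A=\mathbb T^3\times(0,\infty)/\!\sim$, the mapping torus of the diffeomorphism of $\mathbb T^3$ induced by some $A\in\SL_3(\mZ)$, and in particular $\pi_1(M)\cong\mZ^3\rtimes_A\mZ$. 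By the computation in the proof of Proposition~\ref{q12}, the eigenvalues of $A$ are $\lambda^2>1$ together with a pair $z/\lambda,\bar z/\lambda$ of modulus $1/\lambda$; since $\mu_A$ is irreducible over $\mZ$ it has no repeated root, so $z\notin\mR$ and this pair is genuinely non-real. These are exactly the arithmetic data entering Inoue's construction of a surface of type $S^0$.

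It then remains to recognize the complex surface underlying $M$ as $S^0_A$. For this I would transport the complex structure $J$ of $M$ along the diffeomorphism provided by Theorem~\ref{k2} and then identify the resulting surface: it is aspherical (its universal cover $\mR^3\times(0,\infty)$ is contractible), has first Betti number $1$, and has fundamental group $\mZ^3\rtimes_A\mZ$, a torsion-free polycyclic group of Hirsch length $4$ with trivial centre (because $1$ is not an eigenvalue of $A$); carrying in addition a reducible non-flat lcK Weyl structure, it is, among compact complex surfaces, none other than the Inoue surface $S^0$ attached to $A$, cf.~\cite{i} together with the classification of lcK surfaces. Equivalently, one writes down directly a $\pi_1$-equivariant biholomorphism of $\widetilde M$ onto $\mathbb H\times\mathbb C$. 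I expect this last step to be the main obstacle: Theorem~\ref{k2} only gives an isomorphism of conformal (equivalently Weyl) structures, which is blind to $J$, so one genuinely has to reinstate the complex structure and invoke (or re-derive) the surface classification in order to pin down $S^0$ rather than merely its diffeomorphism type; everything preceding it — the chaining of the three results, the dimension count, the eigenvalue bookkeeping — is routine.
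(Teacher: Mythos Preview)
Your proposal is correct and follows essentially the same approach as the paper: chain Theorem~\ref{k1}, Theorem~\ref{k2}, and Proposition~\ref{q12}, then use the parity of the real dimension of an lcK manifold to eliminate $q=1$. The paper in fact gives no proof beyond the one-line remark preceding the statement and handles the Inoue identification with a bare ``cf.~\cite{i}'', so your more careful discussion of that last step---and your honest flagging of it as the genuine obstacle---goes further than the paper itself does.
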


Let us remark that if in Proposition~\ref{q2} we drop the assumption on the dimension of the irreducible factor, then there are many more examples of Weyl-reducible lcK structures. They are obtained on lcK manifolds constructed by Oeljeklaus and Toma \cite{ot}, for every integer $s\ge 1$, on certain compact quotients $M_{\Gamma}$ of $\mC\times \mH^s$, where $\mH$ denotes the upper complex half-plane, $\Gamma$ are certain groups whose action on $\mC\times \mH^s$ is cocompact and properly discontinuous  (for the precise definition of $\Gamma$ and its action see \cite{ot}). We will briefly review them here. 

In order to define the lcK structure on the quotient $M_{\Gamma}$, Oeljeklaus and Toma consider the function
$$F:\mC\times \mH^s\to \mR,\qquad F(z,z_1,\ldots,z_s):=|z|^2+\frac1{y_1\ldots y_s},$$
with $z_k=x_k+iy_k$ and claim that it is a global Kähler potential on $\mC\times \mH^s$ (note a small sign error in \cite{ot}). To check this, we introduce
$$u: \mH^s\to \mR,\qquad u(z_1,\ldots,z_s):=\frac1{y_1\ldots y_s}=\frac{(2i)^s}{\prod_{j=1}^s(z_j-\bar z_j)},$$
and compute 
\begin{equation}\label{e1}\bar\partial u=u\sum_{j=1}^s\frac {\di \bar z_j}{z_j-\bar z_j},\qquad \partial u=-u\sum_{j=1}^s\frac {\di z_j}{z_j-\bar z_j}, \end{equation}
\begin{eqnarray*}\partial \bar\partial u&=&\partial u\wedge\sum_{j=1}^s\frac {\di \bar z_j}{z_j-\bar z_j}-u\sum_{j=1}^s\frac {\di z_j\wedge\di \bar z_j}{(z_j-\bar z_j)^2}\\&=&-u\sum_{j,k=1}^s\frac {1+\delta_{jk}}{(z_j-\bar z_j)(z_k-\bar z_k)}\di z_j\wedge\di \bar z_k, \end{eqnarray*}
whence 
\begin{equation}\label{e2}\partial \bar\partial u=\frac{u}{4}\sum_{j,k=1}^s\frac {1+\delta_{jk}}{y_jy_k}\di z_j\wedge\di \bar z_k.\end{equation}

This shows that $i\partial \bar\partial u$ is the fundamental 2-form of a Kähler metric $h$ on $\mH^s$ whose coefficients are 
$h_{j\bar k}=\frac{u}{4}\frac {1+\delta_{jk}}{y_jy_k}$. 

\begin{prop} \label{irr} The Kähler metric on $\mH^s$ with Kähler potential $u$ is irreducible.  
\end{prop}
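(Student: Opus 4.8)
The plan is to compute the Ricci form $\rho$ of $h$, recognise it as a (negative) multiple of the Kähler form $\omega_{0}$ of the Riemannian product $h_{0}:=\sum_{j=1}^{s}y_{j}^{-2}(\di x_{j}^{2}+\di y_{j}^{2})$ of $s$ copies of the Poincaré upper half-plane, and then use the rigidity of the de Rham decomposition of $h_{0}$ to show that any de Rham splitting of $h$ would be aligned with the coordinate sub-polydiscs of $\mH^{s}$ --- which is easily excluded. For $s=1$ there is nothing to prove (a complex curve is irreducible), so assume $s\ge 2$.

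First I would compute $\det(h_{j\bar k})$. Factoring $\mathrm{diag}(y_{1}^{-1},\dots,y_{s}^{-1})$ out of $h_{j\bar k}=\tfrac{u}{4}\tfrac{1+\delta_{jk}}{y_{j}y_{k}}$ on both sides and using $\det(I+\mathbf 1\mathbf 1^{T})=s+1$ (with $\mathbf 1=(1,\dots,1)^{T}$) together with $u=\prod_{j}y_{j}^{-1}$, one obtains $\det(h_{j\bar k})=(s+1)\,4^{-s}\prod_{j=1}^{s}y_{j}^{-(s+2)}$. Since $i\partial\bar\partial\log y_{j}=-\tfrac12\,\omega_{g_{j}}$, where $\omega_{g_{j}}$ is the Kähler form of the $j$-th Poincaré factor $g_{j}=y_{j}^{-2}(\di x_{j}^{2}+\di y_{j}^{2})$, this gives
\[
\rho=-i\partial\bar\partial\log\det(h_{j\bar k})=(s+2)\sum_{j=1}^{s}i\partial\bar\partial\log y_{j}=-\tfrac{s+2}{2}\,\omega_{0},
\]
so $\rho$ is a nonzero negative multiple of the Kähler form of the product metric $h_{0}$.

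Now suppose, for contradiction, that $h$ is reducible. Since $h$ is Kähler, the (local) de Rham theorem yields, around any point, a nontrivial decomposition $(\mH^{s},h)=(M_{1},g_{1})\times(M_{2},g_{2})$ as a Riemannian product of Kähler manifolds (the factors are Kähler, the parallel complex structure preserving the splitting); let $\pi_{i}$ be the projections and $V_{i}$ the parallel $J$-invariant factor distributions. For a Kähler product the Ricci form splits as $\rho=\pi_{1}^{*}\rho_{g_{1}}+\pi_{2}^{*}\rho_{g_{2}}$, so the previous step gives $\omega_{0}=\pi_{1}^{*}\beta_{1}+\pi_{2}^{*}\beta_{2}$ with $\beta_{i}:=-\tfrac{2}{s+2}\rho_{g_{i}}$. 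Evaluating the positive form $\omega_{0}$ on vectors of $V_{i}$ shows that $\beta_{i}$ is a \emph{positive} $(1,1)$-form on $M_{i}$, hence the Kähler form of a Kähler metric $\bar g_{i}$ on $M_{i}$; then $\omega_{0}$ is the Kähler form of the product $\bar g_{1}\times\bar g_{2}$, and since a Kähler metric is determined by its Kähler form, $h_{0}$ equals this product metric, so $V_{1},V_{2}$ are parallel for the Levi-Civita connection of $h_{0}$ as well. Now $(\mH^{s},h_{0})$ is the product of $s$ copies of the Poincaré half-plane, each $2$-dimensional and non-flat, hence irreducible and non-Euclidean; its holonomy representation is the direct sum of the $s$ pairwise non-isomorphic irreducible summands $T_{p}\mH_{j}$, so its only parallel distributions are the $\bigoplus_{j\in A}T\mH_{j}$, $A\subseteq\{1,\dots,s\}$ (equivalently: by uniqueness of the de Rham decomposition, any Riemannian product decomposition of $h_{0}$ merely regroups the $s$ factors). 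Thus, after renumbering, $V_{1}=\bigoplus_{j=1}^{k}T\mH_{j}$ is the tangent distribution to a sub-polydisc $\{z_{k+1}=c_{k+1},\dots,z_{s}=c_{s}\}$ with $1\le k\le s-1$, i.e.\ $M_{1}$ is parametrised by $(z_{1},\dots,z_{k})$. But then $h=\pi_{1}^{*}g_{1}+\pi_{2}^{*}g_{2}$ forces $h_{1\bar1}=h(\partial_{z_{1}},\partial_{\bar z_{1}})$ to be a function of $z_{1},\dots,z_{k}$ only, whereas $h_{1\bar1}=\tfrac{u}{2y_{1}^{2}}=\tfrac{1}{2\,y_{1}^{3}y_{2}\cdots y_{s}}$ visibly depends on $y_{k+1}$. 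This contradiction proves the proposition.

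The main obstacle is the passage from a de Rham splitting of $h$ to one of $h_{0}$: mere block-diagonality of $\rho$ would be too weak, since a block-diagonal metric need not be a product; one really needs that in a Kähler product the Ricci form is a \emph{sum of pullbacks}, so that the two summands of $\omega_{0}$ are pullbacks and $h_{0}$ is a genuine product metric for the \emph{same} factors, which lets the rigidity of its de Rham decomposition pin the splitting down to the coordinate sub-polydiscs. The determinant computation and the final check on $h_{1\bar1}$ are routine.
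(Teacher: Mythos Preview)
Your argument is correct and in fact completes a step the paper's own proof leaves open. Both proofs compute $\det(h_{j\bar k})=(s+1)4^{-s}u^{s+2}$ and hence the Ricci form, but the paper then concludes in one line that ``the Ricci tensor of $h$ is negative definite on $\mH^{s}$, so $h$ is irreducible'' --- an implication which, taken literally, is false (the product of two Poincar\'e half-planes is K\"ahler with $\mathrm{Ric}=-g<0$, yet reducible). Your decisive extra observation is that $\rho$ is not merely negative but equals $-\tfrac{s+2}{2}\,\omega_{0}$, the K\"ahler form of the \emph{product} Poincar\'e metric $h_{0}$; this lets you transfer a hypothetical K\"ahler splitting $T\mH^{s}=V_{1}\oplus V_{2}$ of $h$ to a genuine Riemannian product decomposition of $h_{0}$ along the \emph{same} distributions, and since $(\mH^{s},h_{0})$ is complete, simply connected, with holonomy $\mathrm{SO}(2)^{s}$ acting through distinct factors on the $s$ summands, its only parallel distributions are the coordinate ones $\bigoplus_{j\in A}T\mH_{j}$, whence the $y_{k+1}$-dependence of $h_{1\bar 1}$ yields the contradiction. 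The one point worth making explicit is why the de Rham factors of a reducible K\"ahler metric can be taken $J$-invariant: $J$ is parallel and permutes the irreducible non-flat holonomy summands, which are pairwise non-isomorphic as representations of the \emph{full} product holonomy, so $J$ fixes each of them; you rely on this when you write $\rho=\pi_{1}^{*}\rho_{g_{1}}+\pi_{2}^{*}\rho_{g_{2}}$.
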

\begin{proof}
The matrix $(h_{j\bar k})$ can be written as the product of 3 matrices
$$(h_{j\bar k})=\frac{u}{4}\begin{pmatrix}\frac{1}{y_1}&0&\ldots &0\\ 0&\frac{1}{y_2}&\ldots &0 \\ \vdots &\vdots &\ddots&\vdots\\ 0&0&\ldots &\frac{1}{y_s}\end{pmatrix}\begin{pmatrix}2&1&\ldots &1\\ 1&2&\ldots &1 \\ \vdots &\vdots &\ddots&\vdots\\1&1&\ldots &2\end{pmatrix}\begin{pmatrix}\frac{1} {y_1}&0&\ldots &0\\ 0&\frac{1}{y_2}&\ldots &0 \\ \vdots &\vdots &\ddots&\vdots\\ 0&0&\ldots &\frac{1}{y_s}\end{pmatrix},$$
so its determinant equals 
$$\det(h_{j\bar k})=\left(\frac{u}{4}\right)^s(s+1)\frac1{(y_1\ldots y_s)^2}=\frac{(s+1)u^{s+2}}{4^s}.$$
The usual formula for the Ricci form $\rho$ of $h$ (cf. e.g. \cite[Eq. (12.6)]{kg}) together with \eqref{e1} and \eqref{e2} gives
\begin{eqnarray*}\rho&=&-i\partial \bar\partial\ln(\det(h_{j\bar k}))=-i(s+2)\partial \bar\partial\ln(u)=-i(s+2)\partial (\frac1u\bar\partial u)\\
&=&-i(s+2)\left(\frac1u\partial\bar\partial u-\frac1{u^2}\partial u\wedge\bar\partial u\right)\\&=&-\frac{i(s+2)}4\sum_{j,k=1}^s\frac {2+\delta_{jk}}{y_jy_k}\di z_j\wedge\di \bar z_k.
\end{eqnarray*}
This shows that the Ricci tensor of $h$ is negative definite on $\mH^s$, so $h$ is irreducible. 
\end{proof}

As a consequence of Proposition \ref{irr}, the Kähler metric on $\mC\times\mH^s$ with fundamental 2-form $\Omega=i\partial\bar\partial F=i\di z\wedge\di\bar z+i\partial\bar\partial u$ is the product of the flat metric on $\mC$ with an irreducible Kähler metric on $\mH^s$. Therefore, the induced lcK structure on the compact quotient $M_{\Gamma}$ is Weyl-reducible, and the irreducible factor of the universal cover given by Theorem \ref{k1} is exactly $N=\mH^s$, so it has dimension $2s$.

{\sc Acknowledgments}. This work has been partially supported by the Procope Project No. 42513ZJ (France)/ 57445459 (Germany). We would like to thank Victor Vuletescu for having pointed out a small gap in the original proof of Proposition \ref{q12}.

\end{document}